	\newtheorem{thm}{Theorem}[section]
\theoremstyle{definition}
\newtheorem{prop}[thm]{Proposition}
\newcommand{\s}{\mathfrak{sl} _2(\mathbb{C})}
\newcommand{\beq}{\begin{equation}}
\newcommand{\eeq}{\end{equation}}
\newcommand{\beas}{\begin{eqnarray*}}
\newcommand{\eeas}{\end{eqnarray*}}
\newcommand{\FF}{\mathbb{F}}
\DeclareMathOperator{\Supp}{\mathrm{Supp}}
\newcommand{\ad}{\mathrm{ad}}
\newtheorem{theorem}{Theorem}[section]
\newtheorem{lemma}[theorem]{Lemma}
\newtheorem{corollary}[theorem]{Corollary}
\newtheorem{definition}[theorem]{Definition}
\newcolumntype{L}{>{$}l<{$}}
\title{The title}%
\author{Abdallah Shihadeh}
\address[A1,A2]{Department of Mathematics, Faculty of Scince, The Hashemite University, Zarqa 13133, PO box 330127, Jordan}
\email[Abdallaha]{abdallaha\_ka@hu.edu.jo}
\begin{document}

\title{\sc{Graded prime ideals over graded Lie algebras }}

\thanks{{\em Keywords:} graded prime Lie ideals, semiprime Lie ideals, total prime Lie ideals, graded Lie algebras}
\thanks{{\em 2010 Mathematics Subject Classification:} Primary 17B70, Secondary 13A15}
\maketitle
\begin{abstract} 
	In this work, we extend the definition of the graded prime ideals from those in commutative graded rings to the ideals over graded Lie algebras. We prove some facts about graded prime Lie ideals in arbitrary Lie algebras that are similar to those about graded prime ideals over a commutative or non-commutative ring.In addition, the ideas of graded semiprime Lie ideals and graded total prime Lie ideals will be introduced.
\end{abstract}

\section{Introduction}\label{sI}

Consider $G$ an abelian group and a Lie algebra $ L $ over a field $\FF$. A  \emph{$G$-grading}  $L$ is the form of a direct sum decomposition of subspaces of $ L $
\begin{equation}\label{eI1}
	L=\bigoplus_{g\in G}L_g
\end{equation}  so that $ [L_g,L_h]\subseteq L_{g+h}$, for all $ g,h\in G $. In this case $L$ is said to be $G$-graded or $ L $ has $ G $-grade.

Keep in mind that zeros subspaces are allowed for the $L_g$. The grading's \textit{support} which is represented with  $\Supp L$ is a 
subset $S\subseteq G$ comprising of $g\in G$ where $L_g\ne\{0\}$ . 
The  \textit{homogeneous components} of the grading are the subspaces $L_g$ , and also the  elements in $L_g$ that are non-zero are referred as \textit{homogeneous of degree} $g$. Moreover,  the set of such homogeneous elements in $ L $ is presented as $ h(L)=\underset{ g\in  G}{\bigcup}L_g $.

	Suppose $G,\ G^{\prime}$ represent two abelian groups. Assuming that $L$ and $ M $ are  Lie algebras with $G$-grad and  $G^{\prime}$-grad respectively. 
	\begin{center}
		$ L=\underset{g\in G}{\bigoplus}L_g$\quad \text{and}\quad
		$M=\underset{h\in G^{\prime}}{\bigoplus} M_{h}$
	\end{center}
Suppose $\phi:L\longrightarrow M$ be a linear map.  $\phi$ is said to be \emph{graded} if for all $g\in G$, we have  $h\in G^{\prime}$ so that  $\phi(L_g)\subseteq M_{h}$. 

 An ideal $ I $ of $ L $ is considered to be a $ G $-graded ideal if
	\begin{equation}
		I=\underset{g\in G}{\bigoplus}(L_{g}\cap I).
	\end{equation} 

The aforementioned definition implies that for every $ z\in I $ with $ z=z_{g_{1}}+z_{g_{1}}+\cdots+z_{g_{n}} $, for some homogeneous elements $ z_{g_{i}}\in L_{g_{i}}  $, it follows that $ z_{g_{i}}\in I  $, i.e.  $ I $ generated by homogeneous elements.

The graded Lie algebras and graded Lie ideals and modules are wildly studied (see e.g. \cite{ BS2, BS3,BKS}).

Suppose  $ R $ be any commutative ring with unity, and $ P $ is an ideal of $ R $. The ideal $ P $ is considered to be prime ideal if for every $ a,b \in R$ with $ a.b\in P $, it follows that $ a\in P \text{ or } b\in P$.

 The prime ideal over Lie algebras is introduced in  \cite{Naoki}. An ideal $ P $  is considered to be a prime ideal of $ L  $ if for any two ideals $ I,\ J $ of $ L $ with $ [I,\ J]\subseteq P $, implies that $ I\subseteq P $ or $ J\subseteq P $. In this paper we will introduce a definition of graded prime ideals over $ L $.

 The following theorem due to N.Kawamoto \cite[Theorem 1]{Naoki} 
\begin{theorem}\cite[Theorem 1]{Naoki} 
The following are equivalent to an ideal $ P $ of $ L $:
	\begin{enumerate}
		\item P is prime 
		\item For $ a\in L $ and $ I $ an ideal of $ L $. If $\left[ a,I \right] \subseteq P$ then $ a\in P $ or $ I\subseteq L $ .
		\item For $ a,b \in L $. If $  \left[ a,\left\langle b \right\rangle  \right]\subseteq L  $, then $ a\in P $ or $ b\in P $.
	\end{enumerate}
\end{theorem}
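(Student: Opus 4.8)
The plan is to establish the three conditions as a cycle $(1)\Rightarrow(2)\Rightarrow(3)\Rightarrow(1)$, reading the containments printed as $\subseteq L$ in $(2)$ and $(3)$ as $\subseteq P$ (the symbol $\subseteq L$ is vacuous there and is evidently a misprint). The only implication that requires genuine work is $(1)\Rightarrow(2)$; the other two are short deductions that exploit the fact that whenever $b$ lies in an ideal $J$ one has $\langle b\rangle\subseteq J$, where $\langle b\rangle$ denotes the ideal generated by $b$.

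For $(1)\Rightarrow(2)$ the crux is a lifting lemma: if $I,P$ are ideals of $L$, $a\in L$, and $[a,I]\subseteq P$, then in fact $[\langle a\rangle,I]\subseteq P$. Recall that $\langle a\rangle$ is the span of $a$ together with all iterated brackets $[\dots[[a,x_1],x_2]\dots,x_n]$ with $x_i\in L$ (this span is closed under bracketing with $L$, hence is the smallest ideal containing $a$), so it suffices to prove by induction on the bracket length $n$ that each such element lands in $P$ after bracketing with an arbitrary $y\in I$. The base case $n=0$ is the hypothesis $[a,I]\subseteq P$. For the inductive step, write $a'=[\dots[a,x_1]\dots,x_{n-1}]$, assume $[a',I]\subseteq P$, and apply the Jacobi identity in the form
\begin{equation*}
[[a',x_n],y]=[a',[x_n,y]]-[x_n,[a',y]].
\end{equation*}
Here $[x_n,y]\in I$ because $I$ is an ideal, so the first term lies in $[a',I]\subseteq P$; and $[a',y]\in[a',I]\subseteq P$, so the second term lies in $[L,P]\subseteq P$ because $P$ is an ideal. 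Hence $[[a',x_n],y]\in P$, closing the induction and yielding $[\langle a\rangle,I]\subseteq P$. Since $\langle a\rangle$ and $I$ are ideals with $[\langle a\rangle,I]\subseteq P$, primeness of $P$ forces $\langle a\rangle\subseteq P$ or $I\subseteq P$, and in the first case $a\in\langle a\rangle\subseteq P$, which is exactly the conclusion of $(2)$.

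The implication $(2)\Rightarrow(3)$ is immediate: given $[a,\langle b\rangle]\subseteq P$, apply $(2)$ with the ideal $I=\langle b\rangle$ to get $a\in P$ or $\langle b\rangle\subseteq P$, and in the latter case $b\in\langle b\rangle\subseteq P$. For $(3)\Rightarrow(1)$, let $I,J$ be ideals with $[I,J]\subseteq P$ and suppose $I\not\subseteq P$, so some $a\in I$ satisfies $a\notin P$. For every $b\in J$ the ideal $\langle b\rangle$ is contained in $J$, hence $[a,\langle b\rangle]\subseteq[I,J]\subseteq P$; by $(3)$ we obtain $a\in P$ or $b\in P$, and since $a\notin P$ we conclude $b\in P$. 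As $b\in J$ was arbitrary, $J\subseteq P$, so $P$ is prime. I expect the main obstacle to be the lifting lemma in $(1)\Rightarrow(2)$, specifically getting the Jacobi bookkeeping and the induction on bracket length right, together with the verification that $\langle a\rangle$ is indeed spanned by such iterated brackets.
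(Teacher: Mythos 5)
Your proof is correct and follows essentially the same route the paper uses for its graded analogue of this result (the paper itself only cites Kawamoto for the ungraded statement): the key step in both is lifting $[a,I]\subseteq P$ to $[\langle a\rangle,I]\subseteq P$ by induction along the description $\langle a\rangle=\sum_i V_i$, $V_0=\mathbb{F}a$, $V_i=[V_{i-1},L]$, via the Jacobi identity. The only cosmetic difference is that you close the cycle with a direct $(3)\Rightarrow(1)$ where the paper goes $(3)\Rightarrow(2)\Rightarrow(1)$, and you correctly read the misprinted ``$\subseteq L$'' as ``$\subseteq P$''.
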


The following ideal and its description is very important in our works.

\begin{definition}
	The ideal generated by $ x\in L $ is the smallest ideal of $ L $ containing the element $ x $, which is denoted by $ \left\langle x\right\rangle  $.
\end{definition}

In \cite{Naoki}, the author describe the ideal $ \left\langle x \right\rangle  $ as the following. Let
\begin{align}\label{generated }
\nonumber	V_0&=\mathbb{F}a \\ 
	V_i&=\left[ V_{i-1},L\right], \  \text{for all   }  i\geq 1 . 
\end{align} Then  $ \left\langle x \right\rangle=\overset{\infty}{\underset{i=0}{\sum}} V_i $.

From the previous description, it is obvious if $ x $ is homogeneous, then the ideal $  \left\langle x \right\rangle $ is graded.

Now let $ I, J $ be an ideal in $ L $. define the ideal 

\begin{align}
	\left(I:J \right)=&\left\lbrace y\in L \mid \left[ y,J\right] \subseteq I \right\rbrace \\ =& \left\lbrace y\in L \mid \ad_y(J) \subseteq I \right\rbrace\nonumber
\end{align}

For $ x\in L $ the ideal 
\begin{align}
	\left(I:x \right)=&\left\lbrace y\in L \mid \left[ y,x\right] \in I \right\rbrace
\end{align}

In this paper, we will introduce a definition of  the graded prime ideals, graded total prime ideals, and graded semiprime ideals and extend some facts about it from the case of commutative and non-commutative rings to the case of Lie algebras. In more than one place, we will follow some results in \cite{g.p submod,Naoki,semi Dawwas,g.p submod non comm ,g radicals } and extend it to our case.

\section {Graded prime Lie ideals }

From now on, let $ L $ be a G-graded Lie algebra over an arbitrary field $ \mathbb{F} $. In this section, we will define the graded prime and semiprime Lie ideals. We will provide some facts about these concepts.
\begin{definition}
Consider $ P $ be an ideal of $ L $. We said that $ P $ is graded prime ideal of $ L $, if whenever $ \left[ I,J\right] \subseteq P $ implies $ I\subseteq P $ or $ J\subseteq P $, for all graded ideals $ I,\ J  $ of $ L $.
\end{definition}

Keep in mind that if $ P $ is graded prime and $ x,y\in h(L) $ with $ [x,y]\in P $, implies that $ [\left\langle x\right\rangle,\left\langle y\right\rangle ] \subseteq P  $. Thus, $\left\langle  x\right\rangle \subseteq P $ or  $\left\langle  y\right\rangle \subseteq P $. That is $ x\in P $ or $ y\in P $. Note that the converse is not true.

\begin{theorem}
Consider the graded ideal $ P $ of $ L $. Then the following statements are equivalent:
	\begin{enumerate}
		\item $ P $ is graded prime ideal.
	\item If $  x\in h(L)$ and $ J $ be a graded ideal, with $ [ x , J  ]\subseteq P $, implies $ x\in P $ or $ J\subseteq P $.
		\item If $  x,y\in h(L)$ with $ [ x , \left\langle y \right\rangle ]\subseteq P $, implies $ x\in P $ or $ y\in P $.
	
	\end{enumerate}
\end{theorem}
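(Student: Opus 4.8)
The plan is to establish the cycle of implications $(1)\Rightarrow(2)\Rightarrow(3)\Rightarrow(1)$, mirroring the ungraded argument of \cite[Theorem 1]{Naoki} but keeping track of gradings throughout.

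For $(1)\Rightarrow(2)$, suppose $P$ is graded prime and that $x\in h(L)$ together with a graded ideal $J$ satisfy $[x,J]\subseteq P$. Since $x$ is homogeneous, $\la x\ra$ is a graded ideal, so once we know $[\la x\ra,J]\subseteq P$, graded primeness will force $\la x\ra\subseteq P$ (i.e.\ $x\in P$) or $J\subseteq P$. The key step, therefore, is to promote $[x,J]\subseteq P$ to $[\la x\ra,J]\subseteq P$. I would do this by induction along the filtration $V_0=\FF x$, $V_i=[V_{i-1},L]$ whose sum is $\la x\ra$. The base case is the hypothesis. For the inductive step, a spanning element of $V_i$ has the form $[v,a]$ with $v\in V_{i-1}$ and $a\in L$, and for $j\in J$ the Jacobi identity gives $[[v,a],j]=[v,[a,j]]+[[v,j],a]$. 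Since $J$ is an ideal, $[a,j]\in J$, so $[v,[a,j]]\in[V_{i-1},J]\subseteq P$ by induction; and since $P$ is an ideal and $[v,j]\in[V_{i-1},J]\subseteq P$, we get $[[v,j],a]\in[P,L]\subseteq P$. Hence $[V_i,J]\subseteq P$, completing the induction.

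The implication $(2)\Rightarrow(3)$ is immediate: given $x,y\in h(L)$ with $[x,\la y\ra]\subseteq P$, the ideal $\la y\ra$ is graded because $y$ is homogeneous, so applying $(2)$ with $J=\la y\ra$ yields $x\in P$ or $\la y\ra\subseteq P$, the latter meaning $y\in P$. For $(3)\Rightarrow(1)$, let $I,J$ be graded ideals with $[I,J]\subseteq P$ and assume $I\not\subseteq P$. Since $I$ is graded and $P$ is a subspace, $I\not\subseteq P$ forces the existence of a homogeneous $x\in I$ with $x\notin P$ (otherwise every homogeneous generator of $I$ would lie in $P$, giving $I\subseteq P$). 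Now for any homogeneous $y\in J$ we have $\la y\ra\subseteq J$, so $[x,\la y\ra]\subseteq[I,J]\subseteq P$; by $(3)$ and $x\notin P$ we conclude $y\in P$. Thus every homogeneous element of $J$ lies in $P$, and since $J$ is graded this gives $J\subseteq P$.

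The only substantive obstacle is the propagation argument in $(1)\Rightarrow(2)$, where the Jacobi identity together with the ideal conditions on both $P$ and $J$ must be used to climb the filtration of $\la x\ra$. The remaining implications rest on two elementary but essential observations: that a graded ideal failing to lie inside $P$ must contain a homogeneous witness outside $P$, and that $\la y\ra$ is graded whenever $y$ is homogeneous, as noted after the generation formula.
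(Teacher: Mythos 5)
Your proof is correct and follows essentially the same route as the paper: the same induction along the filtration $V_0=\mathbb{F}x$, $V_i=[V_{i-1},L]$ via the Jacobi identity for $(1)\Rightarrow(2)$, and the same use of a homogeneous witness outside $P$ for the return direction. The only (cosmetic) difference is that you close the cycle with a direct $(3)\Rightarrow(1)$, whereas the paper proves $(3)\Rightarrow(2)$ and $(2)\Rightarrow(1)$ separately; the content is identical.
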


\begin{proof}

	$ (1)\Rightarrow (2) $

Assume that $ \left[ x,J\right] \subseteq P$ where $ x\in h(L) $ and a graded ideal $ J $ of $ L $. Indeed   $ \left[ \left\langle x\right\rangle ,J\right] \subseteq P$, to see that assume that $ \left\langle x \right\rangle=\underset{i=0}{\overset{\infty}{\sum}}V_i  $, where $ V_i $ as in equation  (\ref{generated }). We are now using induction to show that $ \left[V_i,J \right]\subseteq P $. It is clear for $ i=0 $ that $ \left[V_0,J \right]  =\left[\mathbb{F}x,J \right]\subseteq P$.  Assume now that is true for $ i-1 $. Then 
	\begin{align*}	
		[V_i,J ]=[[V_{i-1},L ] J ]  \subseteq [[L, J], V_{i-1} ]+[[J,V_{i-1} ], L ]\subseteq[J,V_{i-1} ]+[ P,L]  \subseteq P .\end{align*}

	Hence, we have that 
	$ [ \left\langle x \right\rangle  , J] \subseteq P  $. Given that $ P $ indicates a graded prime we have that  $ \left\langle x\right\rangle \subseteq P  $ or $ J \subseteq P  $ and so $ x\in P $ or $ J \subseteq P  $.

	$ (2)\Rightarrow(3) $ Clear

	$ (3)\Rightarrow(2) $
	
	Let $ x\in h(L) $ and $ J $ represent a graded ideal with $ [x,J]\subseteq P $. If $ x\in P $, then we have done. Now if $ x\notin P  $, then for any homogeneous element $ y\in h(J)\subseteq h(L) $, we have that $ [x,y]\in P $ which implies that $ [x,\left\langle y\right\rangle ] \subseteq P $. Using part (3) and since $ x\notin P $, we have that $ y\in P $. Which implies that $ J\subseteq P $.

	$ (2)\Rightarrow(1) $

	Assume $ I,J $ be two graded ideals in $ L $ with  $ \left[ I,J \right] \subseteq P$. Assume that $ I \nsubseteq P $, hence there is a homogeneous element $ x\in h(I)-P\subseteq h(L)-P $. Since $ [I,J]\subseteq P $, we have that $ [x,J]\subseteq P $. By using part(2) we obtain that $ x\in P $ or $ J\subseteq P $. But $ x\notin P $ then $ J\subseteq P $.

\end{proof}

\begin{lemma}
	Assume $ P  $ denotes a graded ideal. Then for all $ x\in L(h)-P $, the ideal $ (P:x) $ is graded.
\end{lemma}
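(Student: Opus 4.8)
The plan is to verify the standard criterion that a subspace $S\subseteq L$ is graded precisely when, for every $y\in S$, all homogeneous components of $y$ again lie in $S$; equivalently $S=\bigoplus_{g\in G}(S\cap L_g)$. Applied to $S=(P:x)=\{y\in L\mid [y,x]\in P\}$, this reduces the lemma to a single computation with the homogeneous decomposition of a bracket.

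First I would fix the degree of $x$: since $x\in h(L)$, there is some $h\in G$ with $x\in L_h$. Then I would take an arbitrary $y\in(P:x)$ and write its homogeneous decomposition $y=\sum_{g\in G}y_g$ with $y_g\in L_g$ (only finitely many nonzero). The central observation is that, because $x$ is homogeneous, $[y_g,x]\in[L_g,L_h]\subseteq L_{g+h}$ for each $g$, so each summand $[y_g,x]$ is itself homogeneous. Consequently $[y,x]=\sum_{g\in G}[y_g,x]$ is exactly the decomposition of $[y,x]$ into its homogeneous components, the degrees $g+h$ being pairwise distinct as $g$ ranges over $G$.

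Now I would invoke the hypothesis that $P$ is graded together with $[y,x]\in P$: since a graded ideal contains the homogeneous components of each of its elements, every $[y_g,x]$ lies in $P$. By the definition of $(P:x)$ this says $y_g\in(P:x)$ for every $g$, i.e. all homogeneous components of $y$ belong to $(P:x)$. As $y\in(P:x)$ was arbitrary, $(P:x)=\bigoplus_{g\in G}\bigl((P:x)\cap L_g\bigr)$, which is precisely the statement that $(P:x)$ is graded.

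The only delicate point is the identification of $\sum_g[y_g,x]$ with the genuine homogeneous decomposition of $[y,x]$: this requires that distinct $g$ yield distinct degrees $g+h$, which holds automatically in the abelian group $G$, and it is exactly here that the homogeneity of $x$ is indispensable (the hypothesis $x\notin P$ plays no role in the gradedness argument itself). That $(P:x)$ is an ideal is already recorded in its definition, so once gradedness of the underlying subspace is established there is nothing further to check.
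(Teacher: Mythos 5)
Your proof is correct and follows essentially the same route as the paper: decompose $y\in(P:x)$ into homogeneous components, observe that homogeneity of $x$ makes each $[y_g,x]$ homogeneous of degree $g+h$ with these degrees pairwise distinct, and then use gradedness of $P$ to conclude each $[y_g,x]\in P$, i.e. each $y_g\in(P:x)$. Your direct verification of the component criterion is in fact cleaner than the paper's detour through the identification $(P:x)\cap L_g=(P_{gh}:x)$ (whose reverse inclusion as written relies on an unnecessary and not-quite-valid claim that $[y,x]\in L_{gh}$ forces $y$ to be homogeneous), and you correctly note that the hypothesis $x\notin P$ is never used.
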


\begin{proof}
	Suppose $ x\in L(h)-P $. Since $ x $ is homogeneous in $ L $, then $ x\in L_h $ for some $ h\in G $.
	To see that   $ (P:x) $ is graded, we have to prove that $ (P:x)=\bigoplus_{g\in G}((P:x)\cap L_g) $.
	
	First we will show that $ (P:x)\cap L_g=(P_{gh}:x) $ for any $ g\in G $.
	Now let $ g\in G $ and 	let $ y_g\in (P:x)\cap L_g $. That is $ y_g $ is homogeneous of degree $ g $ and $ [y_g,x]\in P $. Since $ x $ is homogeneous of degree $ h $ in $ L $ then $ [y_g,x]\in L_{gh} $. But $ P $ is graded, that is $ [y_g,x]\in P_{gh} $ and so $ y\in (P_{gh}:x) $, implies that $ (P:x)\cap L_g\subseteq(P_{gh}:x) $.
	
	Now Let $y \in  (P_{gh}:x) \subseteq (P:x)$, then $ [y,x]\in P_{gh} \subseteq L_{gh} $. Since $ x $ is homogeneous of degree $ h $ in $ L $, then $ y $ must be a homogeneous of degree $ g $ in $ L $. Which implies that $ y\in (P:x)\cap L_g   $.

It is sufficient to demonstrate that $ (P:x)=\bigoplus_{g\in G} (P_{gh}:x) $. It is evident that $ (P:x)\supseteq\bigoplus_{g\in G} (P_{gh}:x) $.
	
	Let $ y\in (P:x) $. Since $ L $ is graded then $ y= y_{g_1}+y_{g_2}+\cdots+y_{g_k} $ for some $ g_1,g_2,\ldots,g_k\in G $. Now $ [y,x]=[y_{g_1},x]+[y_{g_2},x]+\cdots+[y_{g_k}+x]\in P $, and $ P $ is graded then for all  $ i=1,\ldots k $ we have that  $ [y_{g_i},x]\in P_{\bar{g}} $ for some $ \bar{g}\in G $. That is $ [y_{g_i},x]\in P_g  $ where $ g=\bar{g}hh^{-1} $. Which implies that $ [y,x]\in \bigoplus_{g\in G} (P_{gh}:x ) $, that is  $ (P:x)\subseteq\bigoplus_{g\in G} (P_{gh}:x) $. Which complete the proof.

\end{proof}

\begin{theorem}\label{P g.p iff (P:x)=p}
	Suppose $ P $ considered to be a graded ideal of $ L $. Then $ P $ is graded prime if and only if  $ (P:x)=P $ for all $ x\in h(L)-P $.
	 
\end{theorem}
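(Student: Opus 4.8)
The plan is to derive both directions directly from the two results immediately preceding this theorem: the lemma asserting that $(P:x)$ is a graded ideal for $x\in h(L)-P$, and the characterization of graded primeness (in particular its condition (3)), together with the remark following the definition of graded prime. The first observation to record is that $P\subseteq(P:x)$ holds unconditionally, since $P$ is an ideal and hence $[y,x]\in[P,L]\subseteq P$ for every $y\in P$. Thus in the forward direction the only content is the reverse inclusion $(P:x)\subseteq P$, and in the converse the only content is the verification of condition (3).

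For the forward direction I would suppose $P$ is graded prime and fix $x\in h(L)-P$. By the preceding lemma $(P:x)$ is graded, so it is generated by its homogeneous elements and it suffices to show $h((P:x))\subseteq P$. Let $y\in h((P:x))$; then $y\in h(L)$ and $[y,x]\in P$, whence $[x,y]\in P$ by antisymmetry of the bracket. Since $x,y\in h(L)$ and $P$ is graded prime, the remark following the definition of graded prime yields $x\in P$ or $y\in P$; as $x\notin P$ we conclude $y\in P$. Hence every homogeneous element of $(P:x)$ lies in $P$, so $(P:x)\subseteq P$ and therefore $(P:x)=P$.

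For the converse I would assume $(P:x)=P$ for all $x\in h(L)-P$ and verify condition (3) of the characterization theorem. Let $x,y\in h(L)$ with $[x,\langle y\rangle]\subseteq P$; in particular $[x,y]\in P$ because $y\in\langle y\rangle$, that is $x\in(P:y)$. If $y\in P$ we are done; otherwise $y\in h(L)-P$, so by hypothesis $(P:y)=P$ and hence $x\in P$. In either case $x\in P$ or $y\in P$, so $P$ satisfies condition (3) and is therefore graded prime.

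The inclusions above are routine; the single place where the argument genuinely uses the grading is the forward direction, where reducing $(P:x)\subseteq P$ to a statement about homogeneous elements depends entirely on the lemma guaranteeing that $(P:x)$ is graded. The only other point requiring care is bookkeeping with the antisymmetry of the bracket, so that the data $[y,x]\in P$ can be fed into the remark (and into condition (3)), which are phrased in terms of $[x,y]$ and $[x,\langle y\rangle]$.
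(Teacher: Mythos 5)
Your proof is correct and lands on the same central step as the paper's: in the forward direction, for a homogeneous $y\in (P:x)$ one has $[x,y]\in P$ and then graded primeness together with $x\notin P$ forces $y\in P$. You diverge from the paper in two places, both legitimate and arguably cleaner. First, to reduce to homogeneous $y$ you invoke the preceding lemma that $(P:x)$ is a graded ideal; the paper instead never uses that lemma here and handles a general $y\in(P:x)$ by writing $y=y_{g_1}+\cdots+y_{g_k}$ and arguing that, since $x$ is homogeneous and $P$ is graded, each $[x,y_{g_i}]$ lies in a distinct homogeneous component and hence in $P$, reducing to the homogeneous case. Your route is shorter and actually puts the lemma to work. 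Second, for the converse the paper argues straight from the definition of graded prime (given graded ideals $I,J$ with $[I,J]\subseteq P$ and $I\nsubseteq P$, pick a homogeneous $x\in h(I)-P$ and conclude $J\subseteq (P:x)=P$), whereas you verify condition (3) of the characterization theorem; these are interchangeable, though the paper's version is marginally more self-contained since it does not route through that theorem. One caveat, which you inherit from the paper rather than introduce: your forward direction rests on the remark that for a graded prime $P$ and homogeneous $x,y$ with $[x,y]\in P$ one gets $x\in P$ or $y\in P$, and the paper's own justification of that remark (namely that $[x,y]\in P$ implies $[\langle x\rangle,\langle y\rangle]\subseteq P$) is exactly the step the paper's Case 1 also relies on; so your proof is no weaker than the paper's, but it is only as strong as that remark.
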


\begin{proof}
	$ 	\left(\Rightarrow \right)  $

	it is evident that $ P\subseteq (P:x) $. Now let $ y\in (P:x) $. Case1: Let $ y\in h(L) $. Now since  $ y\in (P:x) $ we have $ [x,y]\in P $ which indicates that $ [\left\langle x\right\rangle ,\left\langle y\right\rangle]  \subseteq P$. Given $ P $ is graded prime, and $ x\notin P $, we obtain that $ \left\langle y\right\rangle  \subseteq P $, hence $ y\in P $. 
	
	Case2: Let $ y\notin h(L) $, that is $ y=y_{g_1}+y_{g_2}+\cdots+y_{g_k} $ for some homogeneous elements  $ y_{g_i} \in L_{g_i}, \ i=1,2,\ldots k$.
	
	 Now $ [x,y]=[x,y_{g_1}] +[x,y_{g_2}]+\cdots+[x,y_{g_n}]\in P$. Since $ x $ is a homogeneous element in $ L $, then each component in the combination of $ [x,y] $ belongs to a different homogeneous component. As $ P $ is graded ideal we get that $ [x,y_{g_i}]\in P $ where both $ x $ and $ y_{g_i}   $  are in $ h(L) $, for all $ i=1,\ldots k $. Using Case1, we have also that $ y\in P $. Hence $ (P:x)\subseteq P $. 
	
	$ \left(\Leftarrow \right)  $
	
	Suppose $ I,J $  is a graded ideals in $ L $ with $ [I,J]\subseteq P $. Assume that $ I \nsubseteq P $. Then there exist a homogeneous element $ x\in h(I)\subseteq h(L) $ with $ x \notin P $, (otherwise, as $ P  $ is graded, we find that $ I \subseteq P$, which is not the case). As  $ [I,J]\subseteq P $ we obtain that $ [x,J]\subseteq P $ which implies that $ J\subseteq (P:x)=P $.
\end{proof}

\begin{theorem}\label{image of graded prime}
	Suppose $ \varphi: L\longrightarrow L^{\prime} $ is a graded Lie epimorphism from the $ G $-graded Lie algebra $ L $ into the $ G $-graded algebra $ L^{\prime} $. Assume $ P $ denotes a graded prime ideal of $ L $ so that $ \mathrm{Ker}(\varphi)\subseteq P $. Then $ \varphi(P) $ is graded prime ideal of $ L^{\prime} $.
\end{theorem}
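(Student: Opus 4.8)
The plan is to first check that $\varphi(P)$ is genuinely a graded ideal of $L^{\prime}$, and then to verify primeness by pulling back the test ideals along $\varphi$, using the hypothesis $\mathrm{Ker}(\varphi)\subseteq P$ to keep everything under control. For the first point I would argue that since $\varphi$ is a graded epimorphism, it carries each homogeneous component of $L$ into a homogeneous component of $L^{\prime}$; applying this to the decomposition $P=\bigoplus_{g}(L_g\cap P)$ shows that $\varphi(P)$ is spanned by homogeneous elements and hence decomposes as $\bigoplus_{h}(L^{\prime}_h\cap\varphi(P))$, and surjectivity of $\varphi$ guarantees that the image of the ideal $P$ is again an ideal of $L^{\prime}$.

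For primeness I would work directly from the definition. Let $I^{\prime},J^{\prime}$ be graded ideals of $L^{\prime}$ with $[I^{\prime},J^{\prime}]\subseteq\varphi(P)$, and set $I=\varphi^{-1}(I^{\prime})$ and $J=\varphi^{-1}(J^{\prime})$. These are graded ideals of $L$: preimages of ideals under a Lie homomorphism are ideals, and preimages of graded subspaces under a graded map are graded. Because $\varphi$ is surjective one has $\varphi(I)=I^{\prime}$ and $\varphi(J)=J^{\prime}$, so $\varphi([I,J])=[\varphi(I),\varphi(J)]=[I^{\prime},J^{\prime}]\subseteq\varphi(P)$, whence $[I,J]\subseteq\varphi^{-1}(\varphi(P))$. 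The key identity is $\varphi^{-1}(\varphi(P))=P+\mathrm{Ker}(\varphi)=P$, where the last equality uses precisely the assumption $\mathrm{Ker}(\varphi)\subseteq P$. Thus $[I,J]\subseteq P$, and since $P$ is graded prime we get $I\subseteq P$ or $J\subseteq P$. Pushing forward through the surjection, $I^{\prime}=\varphi(I)\subseteq\varphi(P)$ or $J^{\prime}=\varphi(J)\subseteq\varphi(P)$, which is exactly what graded primeness of $\varphi(P)$ demands.

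The step I expect to be the main obstacle is the bookkeeping of gradings: showing both that $\varphi(P)$ is graded in $L^{\prime}$ and that $\varphi^{-1}(I^{\prime})$ stays graded in $L$ requires tracking how the degree map induced by $\varphi$ behaves on supports, and the identity $\varphi^{-1}(\varphi(P))=P$ must lean on $\mathrm{Ker}(\varphi)\subseteq P$ rather than being automatic. If the preimage computation becomes awkward, an alternative route is to replace the two-ideal test by the homogeneous-element characterization of graded primeness established earlier: given $x^{\prime},y^{\prime}\in h(L^{\prime})$ with $[x^{\prime},\langle y^{\prime}\rangle]\subseteq\varphi(P)$, lift $x^{\prime},y^{\prime}$ to homogeneous $x,y\in h(L)$, use $\varphi(\langle y\rangle)=\langle y^{\prime}\rangle$ together with $\varphi^{-1}(\varphi(P))=P$ to deduce $[x,\langle y\rangle]\subseteq P$, and then apply the characterization $(3)$ of graded primeness for $P$ to conclude $x^{\prime}\in\varphi(P)$ or $y^{\prime}\in\varphi(P)$.
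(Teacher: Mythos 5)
Your proposal is correct and follows essentially the same route as the paper: pull back the test ideals $I'=\varphi^{-1}(I')$, $J'=\varphi^{-1}(J')$, use surjectivity and $\mathrm{Ker}(\varphi)\subseteq P$ to get $[I,J]\subseteq\varphi^{-1}(\varphi(P))=P$, apply graded primeness of $P$, and push forward. Your version is slightly more complete in that it explicitly verifies $\varphi(P)$ is a graded ideal and spells out the identity $\varphi^{-1}(\varphi(P))=P+\mathrm{Ker}(\varphi)=P$, which the paper uses implicitly.
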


\begin{proof}
	Suppose $ I^{\prime},\ J^{\prime} $ considered be a graded ideal of $ L^{\prime} $ so that  $ [I^{\prime},\ J^{\prime}] \subseteq \varphi(P)$. As $ \varphi $ is onto, then there is a graded ideals  $ I=\varphi^{-1}(I^{\prime}), \text{ and }\  J=\varphi^{-1}(J^{\prime}) $.
	
	Note, $\mathrm{Ker}(\varphi)\subseteq I $ and $\mathrm{Ker}(\varphi)\subseteq J $, and since $\mathrm{Ker}(\varphi)\subseteq P $ we get that 
	
$ [I,J] \subseteq\varphi^{-1}(\varphi([I,J]))\subseteq \varphi^{-1}(\varphi(P))=P$. Thus $ I\subseteq P $ or $ J\subseteq P $, hence implying $ I^{\prime}\subseteq \varphi(P) $ or $ J^{\prime}\subseteq\varphi (P) $.

\end{proof}
A direct result of Theorem (\ref{image of graded prime}) is the following conclusion.

\begin{corollary}
	Assume $ \varphi: L\longrightarrow L^{\prime} $ to be a graded Lie epimorphism. Suppose $ P^{\prime} $ indicates a graded ideal of $ L^{\prime} $. If $ \varphi^{-1}(P^{\prime}) $ is graded prime Lie ideal of $ L $, then $ P^{\prime} $ is a graded prime Lie ideal of $ L^{\prime} $.
\end{corollary}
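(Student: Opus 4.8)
The plan is to deduce this Corollary directly from Theorem \ref{image of graded prime} by taking the graded prime ideal there to be $P:=\varphi^{-1}(P^{\prime})$. First I would check that the hypotheses of Theorem \ref{image of graded prime} are satisfied for this choice. By assumption $\varphi^{-1}(P^{\prime})$ is a graded prime Lie ideal of $L$, so the only additional thing to verify is the kernel containment $\Ker(\varphi)\subseteq\varphi^{-1}(P^{\prime})$. This is immediate: for any $z\in\Ker(\varphi)$ we have $\varphi(z)=0$, and since $P^{\prime}$ is an ideal it contains $0$, whence $\varphi(z)\in P^{\prime}$, i.e. $z\in\varphi^{-1}(P^{\prime})$.

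With both hypotheses in hand, Theorem \ref{image of graded prime} applies and gives that $\varphi\bigl(\varphi^{-1}(P^{\prime})\bigr)$ is a graded prime ideal of $L^{\prime}$. The final step is to identify this image with $P^{\prime}$ itself. Here I would invoke that $\varphi$ is a graded Lie \emph{epimorphism}, hence surjective, which yields the set-theoretic identity $\varphi\bigl(\varphi^{-1}(P^{\prime})\bigr)=P^{\prime}$. Combining the two conclusions, $P^{\prime}$ is a graded prime Lie ideal of $L^{\prime}$, as claimed.

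The one point deserving genuine care — the main (and essentially only) obstacle — is the identity $\varphi\bigl(\varphi^{-1}(P^{\prime})\bigr)=P^{\prime}$. Without surjectivity one has only the inclusion $\varphi\bigl(\varphi^{-1}(P^{\prime})\bigr)\subseteq P^{\prime}$, which would not suffice to transport the graded prime property back to all of $P^{\prime}$; it is precisely the epimorphism assumption that supplies the reverse inclusion and closes the argument. Everything else is a routine bookkeeping of the definitions, so I would keep those verifications brief and let Theorem \ref{image of graded prime} carry the substantive content.
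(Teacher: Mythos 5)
Your proposal is correct and is precisely the argument the paper intends: the paper states this corollary as ``a direct result'' of Theorem \ref{image of graded prime} without writing out the details, and your application of that theorem to $P:=\varphi^{-1}(P^{\prime})$ together with the observations that $\Ker(\varphi)\subseteq\varphi^{-1}(P^{\prime})$ and $\varphi\bigl(\varphi^{-1}(P^{\prime})\bigr)=P^{\prime}$ (by surjectivity) fills in exactly those details.
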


\begin{theorem}
	Suppose $ P $ is a graded prime ideal. Then $ P_e $ is graded prime ideal of $ L_e $.
\end{theorem}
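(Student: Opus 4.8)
The plan is to first record the two structural facts that make the statement meaningful. Since $[L_e,L_e]\subseteq L_e$, the identity component $L_e$ is a Lie subalgebra of $L$, and it carries only the trivial $G$-grading (every element of $L_e$ is homogeneous of degree $e$); consequently every ideal of $L_e$ is automatically graded, so ``graded prime'' for $L_e$ coincides with ordinary primeness and I only have to verify the primeness condition for pairs of ideals of $L_e$. Before that I would check that $P_e=P\cap L_e$ is genuinely an ideal of $L_e$: from $[P_e,L_e]\subseteq[P,L]\subseteq P$ together with $[P_e,L_e]\subseteq[L_e,L_e]\subseteq L_e$ I get $[P_e,L_e]\subseteq P\cap L_e=P_e$, using that $P$ is an ideal of $L$ and that $L_e$ is a subalgebra.

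For primeness I would take (graded) ideals $A,B$ of $L_e$ with $[A,B]\subseteq P_e$ and assume $A\nsubseteq P_e$. Then I can choose $a\in A$ with $a\notin P_e$; since $a\in L_e$, the membership $a\in P$ would force $a\in P\cap L_e=P_e$, so in fact $a\in h(L)-P$. The key move is to reduce to the homogeneous two-element statement recorded in the remark following the definition of graded prime: for any $b\in B\subseteq L_e\subseteq h(L)$ one has $[a,b]\in[A,B]\subseteq P_e\subseteq P$ with both $a$ and $b$ homogeneous, so by that remark either $a\in P$ or $b\in P$. Since $a\notin P$ this yields $b\in P$, and as $b\in L_e$ I conclude $b\in P_e$. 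Letting $b$ range over $B$ gives $B\subseteq P_e$, which is exactly the primeness of $P_e$ in $L_e$.

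I expect the main difficulty to be conceptual rather than computational: the ideals $A,B$ of the subalgebra $L_e$ need not extend to graded ideals of $L$ in any controlled way, so they cannot be fed directly into the primeness hypothesis on $P$. The element-wise passage through homogeneous elements sidesteps this entirely, because the remark converts $[a,b]\in P$ for homogeneous $a,b$ into the dichotomy $a\in P$ or $b\in P$. An alternative, more laborious route would form the ideals $\langle A\rangle$ and $\langle B\rangle$ generated inside $L$ (which are graded because $A,B$ are homogeneous), prove $[\langle A\rangle,\langle B\rangle]\subseteq P$ by the same two-variable induction used in the proof of the characterization theorem above, and then intersect back with $L_e$; this should also work but is strictly heavier, so I would present the direct homogeneous argument.
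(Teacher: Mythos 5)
Your reduction to homogeneous elements rests entirely on the remark that follows the definition of a graded prime ideal, namely that $P$ graded prime together with $[x,y]\in P$ for homogeneous $x,y$ forces $x\in P$ or $y\in P$. That remark is false (it is the strictly stronger ``graded total prime'' condition of Section 3), and this is exactly where your argument breaks. The correct element-wise criterion, condition (3) of the characterization theorem in Section 2, requires $[x,\langle y\rangle]\subseteq P$ where $\langle y\rangle$ is the ideal of $L$ generated by $y$; from $[A,B]\subseteq P_e$ you only control $[a,B]$ for $B$ an ideal of the subalgebra $L_e$, which gives no handle on $[a,\langle b\rangle]$. Concretely, take $L=\mathfrak{sl}_2(\mathbb{C})$ with its Cartan $\mathbb{Z}$-grading $L_{-1}=\mathbb{C}f$, $L_0=\mathbb{C}h$, $L_1=\mathbb{C}e$. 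Since $L$ is simple, $P=\{0\}$ is a graded prime ideal; yet $a=b=h$ are homogeneous with $[h,h]=0\in P$ and $h\notin P$, so the remark fails. Moreover $\langle h\rangle=L$ and $[\langle h\rangle,\langle h\rangle]=L\not\subseteq P$, so your fallback route through $[\langle A\rangle,\langle B\rangle]\subseteq P$ collapses at the same point: that containment simply does not follow from $[A,B]\subseteq P$.

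Worse, the same example refutes the statement itself: $L_e=L_0=\mathbb{C}h$ is abelian, $P_e=\{0\}$, and $[L_0,L_0]=\{0\}\subseteq P_e$ while $L_0\not\subseteq P_e$, so $P_e$ is not a (graded) prime ideal of $L_e$. Hence no argument can close the gap. For comparison, the paper's own proof fails at a different step: it applies the graded primeness of $P$ directly to ideals $I_e,J_e$ of $L_e$, which need not be ideals of $L$ at all --- precisely the obstruction you correctly identified as the conceptual crux. Your diagnosis of the difficulty was right; the remark you invoked to sidestep it does not hold.
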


\begin{proof}
	Suppose $ I_e,\ J_e $ are graded ideals in $ L_e $ so  that $[ I_e,\ J_e]\subseteq P_e $. Hence $[ I_e,\ J_e]\subseteq P $, which implying $ I_e\subseteq P $ or   $ J_e\subseteq P $. As $ P_e=P\cap L_e $, then $ I_e\subseteq P_e $ or   $ J_e\subseteq P_e $
\end{proof}

\begin{definition}
	An ideal $ Q $ is said to be semiprime if for any ideal $ H $ with  $ H^{\prime}=[H,H]\subseteq Q $ implies that $ H\subseteq Q $.
\end{definition}

\begin{definition}
	A graded ideal $ Q $ is considered to be graded semiprime ideal if for any graded ideal $ H $ with  $ H^{\prime}=[H,H]\subseteq Q $ implies that $ H\subseteq Q $.
\end{definition}

It is evident that any (graded) prime ideal is (graded) semiprime ideal.

\begin{definition}
 The (graded) ideal $ N $ of $ L $ is considered to be (graded) irreducible if $ N=H\cap K $ along with $ H,\ K $ (graded) ideals of $ L $ indicates that $ N=H \text{ or } N=K$.
\end{definition}
\begin{lemma}\label{prime is irreducble}
Any graded prime Lie ideal is graded irreducible ideal.
\end{lemma}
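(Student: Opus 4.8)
The plan is to unwind the definition of graded irreducibility directly and then feed the resulting configuration into the defining property of a graded prime ideal. So let $P$ be a graded prime Lie ideal, and suppose we are given a factorization $P = H \cap K$ with $H$ and $K$ graded ideals of $L$. The goal is to show that $P = H$ or $P = K$, which is exactly what graded irreducibility demands.

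The first and decisive step is to observe that $[H,K] \subseteq P$. This follows from the elementary fact that the bracket of two ideals lands in each of them: since $H$ is an ideal, $[H,K] \subseteq [H,L] \subseteq H$, and by antisymmetry of the bracket $[H,K] = [K,H] \subseteq [K,L] \subseteq K$, so $[H,K] \subseteq H \cap K = P$. Because $H$ and $K$ are graded ideals and $P$ is graded prime, the defining property immediately yields $H \subseteq P$ or $K \subseteq P$.

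To finish, I would combine this with the trivial containments coming from the factorization itself. Since $P = H \cap K$, we have $P \subseteq H$ and $P \subseteq K$. Thus if $H \subseteq P$, then $H = P$, and if $K \subseteq P$, then $K = P$. In either case the conclusion $P = H$ or $P = K$ holds, so $P$ is graded irreducible.

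I do not expect a genuine obstacle here; the only point requiring a moment of care is the containment $[H,K] \subseteq H \cap K$, which rests on $H$ and $K$ both being ideals (not merely subalgebras), and on $H, K$ being \emph{graded} so that the graded primeness of $P$ may legitimately be invoked. Once those hypotheses are in hand, the argument is a short chain of inclusions with no case analysis beyond the binary alternative produced by primeness.
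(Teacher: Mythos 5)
Your proof is correct and follows essentially the same route as the paper: both observe that $[H,K]\subseteq H\cap K=P$ and then invoke graded primeness to conclude $H\subseteq P$ or $K\subseteq P$. Your writeup is in fact slightly more complete, since you explicitly note the reverse inclusions $P\subseteq H$ and $P\subseteq K$ needed to upgrade containment to equality.
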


\begin{proof}
	Assume $ P $ be a graded prime ideal of $ L $ and  $ I,J $ considered to be graded ideals in $ L $ with $ P=I\cap J $. Then $ [I,J] \subseteq I\cap J=P$. As $ P $ is graded prime then $ I\subseteq P $ or $ J\subseteq P $.
\end{proof}
In the next fact, we follow \cite[Proposition 4]{Naoki}.

\begin{prop}
	An ideal $ P $ of $ L $ is considered to be graded prime ideal if and only if $ P $ is graded irreducible and graded semiprime	ideal.
\end{prop}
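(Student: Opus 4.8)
The plan is to handle the two implications separately, observing that the forward direction is essentially already in hand. If $P$ is graded prime, then Lemma \ref{prime is irreducble} shows that $P$ is graded irreducible, while the remark following the definition of graded semiprime ideals records that every graded prime ideal is graded semiprime. Combining these two facts gives the ``only if'' part immediately, with no further argument needed.

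For the converse, which is the substantive direction, I would assume that $P$ is both graded irreducible and graded semiprime and deduce that it is graded prime. So let $I, J$ be graded ideals of $L$ with $[I,J] \subseteq P$; the goal is to show $I \subseteq P$ or $J \subseteq P$. The guiding idea is to construct two graded ideals whose intersection is exactly $P$, so that the irreducibility hypothesis can be brought to bear. First I would set $A = I + P$ and $B = J + P$. Both are graded, since a sum of graded ideals is graded, and both contain $P$. Expanding $[A,B]$ by bilinearity as $[I,J] + [I,P] + [P,J] + [P,P]$ and using $[I,J] \subseteq P$ together with the fact that $P$ is an ideal (so $[L,P] \subseteq P$) shows that $[A,B] \subseteq P$.

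Next I would put $C = A \cap B$, which is again a graded ideal and which contains $P$ because $P \subseteq A$ and $P \subseteq B$. Since $C \subseteq A$ and $C \subseteq B$, monotonicity of the bracket gives $[C,C] \subseteq [A,B] \subseteq P$, so graded semiprimeness of $P$ forces $C \subseteq P$. Combined with $P \subseteq C$ this yields $A \cap B = P$. Finally, applying graded irreducibility of $P$ to the decomposition $P = A \cap B$ gives $P = A$ or $P = B$, that is $I + P = P$ or $J + P = P$, equivalently $I \subseteq P$ or $J \subseteq P$, which is exactly what is required.

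The one point I would verify most carefully, and which I expect to be the only real subtlety, is that the auxiliary ideals $A$, $B$, and $C = A \cap B$ are all genuinely graded, since both the semiprime and the irreducible hypotheses are formulated only for graded ideals. This is routine because sums and intersections of graded ideals are again graded, but it is precisely what allows the graded notions of ``irreducible'' and ``semiprime'' to combine into ``prime.'' Everything else reduces to the single bracket expansion and the two one-line appeals to the hypotheses.
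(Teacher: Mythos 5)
Your proof is correct and follows essentially the same route as the paper: the forward direction combines Lemma \ref{prime is irreducble} with the observation that graded prime implies graded semiprime, and the converse uses the same auxiliary ideal $H=(I+P)\cap(J+P)$ (your $C$), showing $H'\subseteq P$ to invoke semiprimeness and then irreducibility. Your version is somewhat more explicit about the bilinear expansion of $[I+P,J+P]$ and about verifying that the auxiliary ideals are graded, but the argument is the same.
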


\begin{proof}
$ \Rightarrow $ 

 Evident by using Lemme \ref{prime is irreducble}.

$ \Leftarrow $

%Let $ P $ be graded semiprime and graded irreducible ideal.

Assume $ I,J $ be graded ideals in $ L $ so that $ [I,J]\subseteq P $. Define the ideal  $ H=(I+P)\cap (J+P) $. It is evident that $ P\subseteq H $. Since $ [I,J]\subseteq P $, we have that $ H^{\prime}\subseteq [I+P,J+P]\subseteq P $. As $ P $ is semiprime, then $ H\subseteq P $. Hence, $ H=(I+P)\cap (J+P)=P $. Therefore  $ P= (I+P)$ or $ P= (J+P)$, that is $ I\subseteq P $ or   $ J\subseteq P $.
\end{proof}

\begin{theorem}
	Assume $ Q $ considered to be a graded ideal. Then $ Q $ indicates graded semiprime ideal if and only if for any $ x
	\in h(L)  $ with $  \left\langle x\right\rangle^{\prime} \subseteq Q $ implies that $ x\in Q $ 
\end{theorem}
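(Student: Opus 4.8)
The plan is to prove the two implications separately, in each case reducing everything to the ideal generated by a single homogeneous element and exploiting the fact observed immediately after the construction of $\langle x\rangle$: if $x$ is homogeneous then $\langle x\rangle$ is a graded ideal. This is what makes the single-generator ideals legitimate test objects in the definition of graded semiprimeness.

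For the forward implication, I would suppose $Q$ is graded semiprime and take $x\in h(L)$ with $\langle x\rangle'\subseteq Q$. Since $x$ is homogeneous, $\langle x\rangle$ is a graded ideal, so I may apply the definition of graded semiprime with $H=\langle x\rangle$: from $H'=\langle x\rangle'\subseteq Q$ one gets $H=\langle x\rangle\subseteq Q$, and in particular $x\in\langle x\rangle\subseteq Q$. This direction is immediate once one recognizes $\langle x\rangle$ as an admissible choice of $H$.

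For the converse, I would assume the stated condition on homogeneous elements and let $H$ be a graded ideal with $H'=[H,H]\subseteq Q$. Because $H$ and $Q$ are both graded, it suffices to show every homogeneous element of $H$ lies in $Q$; then each element of $H$, being a sum of its homogeneous components (each of which lies in $H\cap L_g\subseteq h(H)\cup\{0\}$), lands in $Q$, giving $H\subseteq Q$. So I fix $x\in h(H)\subseteq h(L)$. Since $\langle x\rangle$ is the smallest ideal containing $x$ and $H$ is an ideal containing $x$, we have $\langle x\rangle\subseteq H$, whence $\langle x\rangle'=[\langle x\rangle,\langle x\rangle]\subseteq [H,H]=H'\subseteq Q$. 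Applying the hypothesis to the homogeneous element $x$ then yields $x\in Q$, completing the reduction.

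The argument is largely formal, so there is no deep obstacle; the step I would treat most carefully is the reduction in the converse to checking only homogeneous elements, which relies simultaneously on the gradedness of $H$ (so that it is spanned by homogeneous elements) and of $Q$ (so that membership can be tested componentwise). The linchpin is the containment $\langle x\rangle\subseteq H$ for $x\in H$, which is exactly the smallest-ideal property in the definition of $\langle x\rangle$ and is what allows the derived ideal $\langle x\rangle'$ to be controlled by $H'\subseteq Q$.
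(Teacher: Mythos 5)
Your proof is correct and follows essentially the same route as the paper: the forward direction is the observation that $\langle x\rangle$ is a graded ideal for homogeneous $x$ (so the semiprime condition applies to it directly), and the converse uses the containment $\langle x\rangle\subseteq H$ to get $\langle x\rangle'\subseteq H'\subseteq Q$ for each homogeneous $x\in H$. The only cosmetic difference is that you argue the converse directly via the homogeneous decomposition of $H$, whereas the paper phrases it as a contradiction starting from a homogeneous element of $H-Q$.
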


\begin{proof}
$ 	\Rightarrow $  Clear

$ \Leftarrow $ Suppose $ H $ considered to be a graded ideal in $ L $ with $ H^{\prime} \subseteq Q$. Assume that $ H\nsubseteq Q $, then there is a homogeneous element $ x\in H-Q $. Since $\left\langle x \right\rangle \subseteq H   $ , then $\left\langle x \right\rangle ^{\prime}\subseteq H^{\prime} \subseteq Q  $, which indicates that $ \left\langle x\right\rangle \subseteq Q $, that is $ x\in Q $, which is a contradiction.
\end{proof}

\section{Graded total prime Lie ideals}
\begin{definition}
	An ideal $ P $  of $ L $ is considered to be graded total prime ideal of $ L $ if for all $ x,y  \in h( L) $ with  $ \left[ x,y\right] \in P $  indicates that $ x\in  P $ or $ y\in  P $.
\end{definition}
Any graded prime ideal is clearly graded total prime.

%النظرية التالية غير مبرهنه عند ابو دواس . ساقوم بترجمتها و اثباتها ان شاء الله .

%\begin{theorem}	Let $ P $ be a graded ideal of $ L $. Then $ P $ is graded total prime ideal if and only if $  x,y\in h(L)$ with $\left\langle  [ x , y  ]\right\rangle \subseteq P $, then $ \left\langle x\right\rangle \subseteq P $ or $ \left\langle y\right\rangle \subseteq P $. 
		
		%\item$  (P:L)=(P:x) $ for all $ x\in h(L)-P $.
		%\item $ (P:L) $ is graded total prime ideal.	\end{theorem}

		\begin{theorem}
			Assume $ P $ considered to be a graded ideal of $ L $. Then  $ P $ is graded total prime ideal if and only if for any $  x,y\in h(L)$ with $\left\langle  [ x , y  ]\right\rangle \subseteq P $, then $ \left\langle x\right\rangle  \subseteq P $ or $  \left\langle y\right\rangle  \subseteq P $.
		\end{theorem}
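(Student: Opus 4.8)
The plan is to prove both implications by reducing everything to the single-element definition of graded total primeness, using two facts already available: the observation recorded earlier that the ideal generated by a homogeneous element is itself graded, and the elementary fact that for an ideal $P$ and an element $z$ one has $z \in P$ if and only if $\langle z \rangle \subseteq P$, since $\langle z \rangle$ is by definition the smallest ideal containing $z$.

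For the forward direction, I would start from two homogeneous elements $x, y \in h(L)$ satisfying $\langle [x,y] \rangle \subseteq P$. Writing $x \in L_g$ and $y \in L_h$ for some $g,h \in G$, the grading condition $[L_g, L_h] \subseteq L_{g+h}$ shows that $[x,y]$ is homogeneous; in particular $[x,y] \in \langle [x,y] \rangle \subseteq P$. Applying the definition of graded total prime to the homogeneous pair $x, y$ gives $x \in P$ or $y \in P$. Since $\langle x \rangle$ (respectively $\langle y \rangle$) is the smallest ideal containing $x$ (respectively $y$) and $P$ is an ideal containing it, I conclude $\langle x \rangle \subseteq P$ or $\langle y \rangle \subseteq P$, as required.

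For the converse, I would take homogeneous $x, y \in h(L)$ with $[x,y] \in P$ and aim to show $x \in P$ or $y \in P$. Again $[x,y]$ is homogeneous, so $\langle [x,y] \rangle$ is a graded ideal, and since $P$ is an ideal containing the homogeneous element $[x,y]$ we have $\langle [x,y] \rangle \subseteq P$. The hypothesis then yields $\langle x \rangle \subseteq P$ or $\langle y \rangle \subseteq P$, and because $x \in \langle x \rangle$ and $y \in \langle y \rangle$, this gives $x \in P$ or $y \in P$, which is precisely graded total primeness.

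The only point requiring care---more a bookkeeping matter than a genuine obstacle---is the repeated passage between membership of a homogeneous element and containment of the ideal it generates. I would be careful to invoke that $[x,y]$ is homogeneous at each step, both so that $\langle [x,y] \rangle$ is graded (making the hypothesis applicable) and so that minimality of $\langle\,\cdot\,\rangle$ gives the two-way translation between ``$z \in P$'' and ``$\langle z \rangle \subseteq P$'' for the relevant homogeneous $z$. No induction on the defining filtration of $\langle x \rangle$ is needed here, unlike in the earlier theorem, since the definition of graded total prime already speaks about homogeneous elements directly.
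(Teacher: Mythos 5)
Your proof is correct and follows essentially the same route as the paper: both directions reduce to the equivalence between $[x,y]\in P$ and $\langle [x,y]\rangle\subseteq P$ via minimality of the generated ideal, then apply the definition of graded total prime. Your added remarks on the homogeneity of $[x,y]$ are accurate but not needed beyond what the paper's own (very short) argument already uses.
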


\begin{proof}
$  \Rightarrow  $
	
	Let $ x,y\in h(L) $ with $ \left\langle [x,y]\right\rangle \subseteq P $. Hence $ [x,y] \in P $, which implies that $ x\in P  $ or $ y\in P $, that is $ \left\langle x\right\rangle \subseteq  P$ or $ \left\langle y\right\rangle \subseteq  P$. 
	
$ \Leftarrow $
	
	Let $ x,\ y \in h(L) $ with $[x,y] \in P $. Then $\left\langle [x,y] \right\rangle \subseteq P   $. Hence, $ \left\langle x\right\rangle \subseteq P $ or $ \left\langle x\right\rangle \subseteq P $, and so $ x\in P $ or $ y\in P $.
		%$ (2)\Rightarrow(3) $	Let $ x\in h(L)-P $. it is clear that  $ (P:L)\subseteq (P:x) $ because of the harder condition on $ (P:L) $. Now let $ y\in (P:x) $. Hence we can use the same argument in chosen $ y $ as in proof Theorem (\ref{P g.p iff (P:x)=p}), so without loss of generality, assume that $ y\in h(L) $. Since $ y\in (P:x) $ then $ [x,y]\in P $, which implies that $ \left\langle [x,y]\right\rangle \subseteq P $. Since $ x\notin P  $, then $ \left\langle y\right\rangle \subseteq P $. But $ [y,L]\subseteq \left\langle y\right\rangle  $, which means that $ y\in (P:L) $.

\end{proof}

\begin{theorem}
	Assume $ P $ indicates a graded total prime ideal. Then $ (P:x)=(P:L) $ for all $ x\in h(L)-P $.
\end{theorem}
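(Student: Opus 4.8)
The plan is to prove the chain of inclusions $(P:x)\subseteq P\subseteq (P:L)\subseteq (P:x)$, which forces all three sets to coincide and in particular yields the desired equality $(P:x)=(P:L)$. Two of these inclusions are immediate and use nothing beyond the fact that $P$ is an ideal: since $[P,L]\subseteq P$ we get $P\subseteq (P:L)$, and since $x\in L$, any $y$ with $[y,L]\subseteq P$ satisfies $[y,x]\in P$, giving $(P:L)\subseteq (P:x)$. Thus the whole statement reduces to establishing the single inclusion $(P:x)\subseteq P$.

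For this key inclusion I would first invoke the lemma above guaranteeing that $(P:x)$ is graded whenever $P$ is a graded ideal and $x\in h(L)-P$; this is exactly the present setting, since a graded total prime ideal is in particular graded, and $x\in h(L)-P$ by hypothesis. Let $y\in (P:x)$ and write $y=y_{g_1}+\cdots+y_{g_k}$ as its decomposition into homogeneous components. Because $(P:x)$ is graded, each component lies in $(P:x)$, that is $[y_{g_i},x]\in P$ for every $i$. Now $y_{g_i}$ and $x$ both belong to $h(L)$ and $x\notin P$, so the graded total prime property forces $y_{g_i}\in P$ for each $i$. Summing, $y\in P$, which gives $(P:x)\subseteq P$ and completes the argument.

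The only point needing care is the appeal to gradedness of $(P:x)$: if $x\in L_h$ then $[y_{g_i},x]\in L_{g_i+h}$, and distinct degrees $g_i$ produce distinct degrees $g_i+h$, so the relation $[y,x]=\sum_i[y_{g_i},x]\in P$ together with $P$ being graded separates into the individual conditions $[y_{g_i},x]\in P$; this is precisely what the earlier lemma provides. The total prime hypothesis then does the rest, and I expect no genuine obstacle. Indeed this result is the total-prime counterpart of Theorem \ref{P g.p iff (P:x)=p}, and the argument in fact proves the slightly stronger statement that $(P:x)=P=(P:L)$ for every $x\in h(L)-P$.
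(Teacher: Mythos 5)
Your proof is correct and follows essentially the same route as the paper: both reduce to homogeneous $y\in(P:x)$ via the gradedness of $P$ (equivalently of $(P:x)$), apply the total prime property with $x\notin P$ to get $y\in P$, and pass through $P$ to reach $(P:L)$. Your write-up merely makes the chain $(P:x)\subseteq P\subseteq(P:L)\subseteq(P:x)$ explicit, which the paper leaves implicit.
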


\begin{proof}
Let $ x\in h(L)-P $. It is evident that  $ (P:L)\subseteq (P:x) $ due to the stricter condition placed on $ (P:L) $. Now let $ y\in (P:x) $. Hence we can use the same argument in chosen $ y $ as in proof Theorem (\ref{P g.p iff (P:x)=p}), so without loss of generality, assume that $ y\in h(L) $. Since $ y\in (P:x) $ implies $ [x,y]\in P $. Since $ x\notin P  $, then $ y\in  P $, that is  $ [y,L]\subseteq P  $, which means that $ y\in (P:L) $.
\end{proof}

\begin{theorem}
	Suppose $ P $ considered be a graded total prime ideal. Then $ (P:L) $  is graded total prime ideal of $ L $.
\end{theorem}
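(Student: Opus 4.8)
The plan is to prove the identity $(P:L)=P$; once this is in hand, the graded total primeness of $(P:L)$ is exactly the hypothesis on $P$ and nothing further is needed. So the whole task reduces to establishing two inclusions, after first checking that $(P:L)$ is a graded ideal.

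For gradedness, note that $y\in(P:L)$ if and only if $[y,a]\in P$ for every $a\in h(L)$, since the homogeneous elements span $L$ and $P$ is a subspace; thus $(P:L)=\bigcap_{a\in h(L)}(P:a)$. For $a\in P$ one has $(P:a)=L$, because $[L,a]\subseteq[L,P]\subseteq P$, so such factors impose no condition and $(P:L)=\bigcap_{a\in h(L)-P}(P:a)$. Each $(P:a)$ with $a\in h(L)-P$ is graded by the preceding lemma, and an intersection of graded ideals is graded, so $(P:L)$ is graded.

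The inclusion $P\subseteq(P:L)$ is automatic and uses only that $P$ is an ideal: if $y\in P$ then $[y,L]\subseteq[P,L]\subseteq P$, whence $y\in(P:L)$. The reverse inclusion $(P:L)\subseteq P$ is the heart of the matter and the only point where total primeness is invoked. Because $(P:L)$ is graded, it suffices to show that its homogeneous elements belong to $P$. Suppose $y\in h(L)\cap(P:L)$ with $y\notin P$; then $[y,a]\in P$ for every $a\in h(L)$, and graded total primeness of $P$ together with $y\notin P$ forces $a\in P$ for all homogeneous $a$, i.e. $P=L$---contradicting $P\neq L$ (the case $P=L$ being trivial, since then $(P:L)=L=P$). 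Hence no such $y$ exists and $(P:L)\subseteq P$.

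Combining the two inclusions gives $(P:L)=P$, and therefore $(P:L)$ is graded total prime precisely because $P$ is. I expect the reverse inclusion $(P:L)\subseteq P$ to be the main obstacle: it is where one must apply total primeness at the level of homogeneous elements and separately dispose of the improper-ideal case $P=L$, whereas the forward inclusion and the gradedness are routine bookkeeping.
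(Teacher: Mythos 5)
Your proof is correct, but it is organized differently from the paper's. The paper verifies the defining condition directly: given homogeneous $x,y$ with $[x,y]\in (P:L)$ and $P\neq L$, it picks $z\in L-P$, notes $[[x,y],z]\in P$, and applies total primeness twice to get $x\in P$ or $y\in P$, hence $x\in(P:L)$ or $y\in(P:L)$. You instead prove the sharper identity $(P:L)=P$ and let the theorem fall out as a tautology. The underlying mechanism is the same --- bracket a homogeneous element of $(P:L)$ against homogeneous elements outside $P$ and invoke total primeness --- but your packaging buys several things: it explicitly establishes that $(P:L)$ is graded (via $(P:L)=\bigcap_{a\in h(L)-P}(P:a)$ and the paper's lemma on $(P:a)$), a point the theorem's statement needs but the paper's proof never addresses; it works only with homogeneous test elements, sidestepping the fact that the paper's chosen $z\in L-P$ need not be homogeneous for graded total primeness to apply to the pair $([x,y],z)$; and, combined with the preceding theorem $(P:x)=(P:L)$, it immediately yields $(P:x)=P$ for all $x\in h(L)-P$, the total-prime analogue of Theorem \ref{P g.p iff (P:x)=p}. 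Both proofs must dispose of the degenerate case $P=L$ separately, and both do.
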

\begin{proof}
		
	Let $ x,\ y \in h(L) $ with $ [x,y]\in (P:L) $. Actually if $ L=P $ then we have done. So let $ P\neq L $ and choose $ z\in L-P $. Since  $ [x,y]\in (P:L) $, we have that $ [[x,y],z]\in P $. Hence $ [x,y]\in P $, that is $ x\in P $ or $ y\in P $. Which implies that $ [x,L]\subseteq P $ or $ [y,L]\subseteq P $.
\end{proof}
\begin{corollary}
		Suppose $ P $ considered to be a graded total prime ideal. Then $ (P:x) $  is graded total prime ideal of $ L $ for all $ x\in h(L)-P $.
\end{corollary}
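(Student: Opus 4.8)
The plan is to deduce this corollary directly by chaining the two theorems that immediately precede it, since between them they already carry all of the content. Fix a homogeneous element $x \in h(L) - P$. The first of the two preceding theorems asserts that, for a graded total prime ideal $P$, one has the equality of ideals $(P:x) = (P:L)$ for every such $x$; the second asserts that $(P:L)$ is itself a graded total prime ideal of $L$. Combining these, $(P:x)$ coincides as an ideal with $(P:L)$, and the latter is graded total prime, whence $(P:x)$ is graded total prime.

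So concretely I would first cite the equality $(P:x)=(P:L)$ and then substitute it into the conclusion of the second theorem. No separate verification of the total prime condition on $(P:x)$ is needed: the defining property — that $[a,b] \in (P:x)$ with $a,b \in h(L)$ forces $a \in (P:x)$ or $b \in (P:x)$ — is inherited verbatim from $(P:L)$ because the two ideals are literally equal. The same remark disposes of the requirement that $(P:x)$ be a graded ideal, which again transfers from $(P:L)$.

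I do not expect any genuine obstacle here, precisely because the work has been front-loaded into the two theorems: the nontrivial step is the identification $(P:x) = (P:L)$ (which itself rests on the total prime property of $P$ and on reducing an arbitrary $y \in (P:x)$ to its homogeneous components, exactly as in the proof of Theorem \ref{P g.p iff (P:x)=p}), together with the stability of the total prime property under passage to $(P:L)$. Once those are in hand the corollary is a formal consequence of the transitivity of equality, so the only point to be careful about is to invoke the hypothesis $x \in h(L) - P$, which is exactly what makes the equality $(P:x) = (P:L)$ available in the first place.
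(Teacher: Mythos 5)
Your proposal is correct and is exactly the argument the paper intends: the statement is presented as a corollary immediately after the two theorems giving $(P:x)=(P:L)$ for $x\in h(L)-P$ and the graded total primeness of $(P:L)$, and chaining them is the whole proof. No gaps; your remark that the hypothesis $x\in h(L)-P$ is what activates the equality is the one point worth making explicit.
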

\begin{definition}
	The nonempty subset $ S $ of $ h(L) $ is called graded-multiplicatively closed if when ever $ x,y\in S $ implies that $ [x,y] \in S $
\end{definition}

\begin{prop}
	A graded ideal $ P $ of $ L $ is graded  total prime ideal if and only if $ S=h(L)-h(P) $ is graded-multiplicatively closed.
\end{prop}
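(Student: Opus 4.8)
The plan is to prove the biconditional by two short contrapositive arguments, both of which rest on two elementary observations about the grading. The whole proof is essentially a bookkeeping exercise, so I would first isolate the two facts that do all the work. \textbf{(i)} The bracket of two homogeneous elements is again homogeneous: if $x\in L_g$ and $y\in L_h$, then $[x,y]\in L_{g+h}\subseteq h(L)$. \textbf{(ii)} Since $P$ is a graded ideal, a homogeneous element lies in $P$ if and only if it already lies in $h(P)$; equivalently, for any $z\in h(L)$ we have $z\notin P$ if and only if $z\in h(L)-h(P)=S$. Fact (ii) is exactly what lets me translate freely between the ideal-membership language of total primeness and the set-membership language of the multiplicatively closed set $S$.

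For the forward direction I would assume $P$ is graded total prime (and proper, so that $S$ is nonempty, as required by the definition of graded-multiplicatively closed). Take $x,y\in S$. By (ii) these are homogeneous elements with $x\notin P$ and $y\notin P$, and by (i) the bracket $[x,y]$ is homogeneous. If $[x,y]$ were in $P$, then total primeness would force $x\in P$ or $y\in P$, contradicting $x,y\in S$. Hence $[x,y]\notin P$; being homogeneous, (ii) then gives $[x,y]\in S$. Thus $S$ is graded-multiplicatively closed.

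For the reverse direction I would assume $S=h(L)-h(P)$ is graded-multiplicatively closed and verify the defining condition of a graded total prime ideal. Let $x,y\in h(L)$ with $[x,y]\in P$, and suppose toward a contradiction that $x\notin P$ and $y\notin P$. By (ii) both $x$ and $y$ lie in $S$, so closure of $S$ yields $[x,y]\in S=h(L)-h(P)$, whence $[x,y]\notin P$ by (ii) again; this contradicts the hypothesis $[x,y]\in P$. Therefore $x\in P$ or $y\in P$, and $P$ is graded total prime.

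I do not expect a genuine obstacle here: the argument is purely formal. The only two points that require care, and which I would flag explicitly, are the nonemptiness clause in the definition of graded-multiplicatively closed (so the forward direction tacitly needs $P\neq L$, i.e.\ $S\neq\varnothing$) and the repeated, essential use of the grading in fact (ii) to identify ``$z\in P$'' with ``$z\in h(P)$'' for homogeneous $z$; without the graded hypothesis on $P$ this equivalence would fail and the passage between the two formulations would break down.
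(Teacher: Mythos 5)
Your proof is correct and follows essentially the same route as the paper: both directions are handled by the same short contradiction/contrapositive arguments, using that the bracket of homogeneous elements is homogeneous and that for a graded ideal membership of a homogeneous element in $P$ is equivalent to membership in $h(P)$. Your explicit flagging of the nonemptiness of $S$ (i.e.\ $P\neq L$) is a small point of care the paper's version passes over silently.
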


\begin{proof}

	$( \Rightarrow )$ Let  $ P $ indicates  a graded prime ideal, and  $ x_g, \ y_h \in S$, that are homogeneous elements in $ L $ which is not  belong to $ P $. it is evident that $ [x_g,y_h]\in h(L) $. If $ [x_g,y_h]\in h(P)\subseteq P $, then $ x_g\in P $ or $ y_h \in P $. As $ P $ is graded, then $ x_g\in P_g $ or $ y_h \in P_h $ which is a contradiction. That is $ [x_g,y_h]\in h(L)-h(P)=S $.

	$ (	\Leftarrow) $ Let $ S $ multipicatively-closed, and let $ x,y \in h(L) $ with $ [x,y]\in P $. If $ x $  and $ y $ are both in $ S $, and since $ x $ and $ y $ are homogeneous, then $ [x,y]\notin P $, a contradiction. Hence $ x\notin S $ or $ y\notin S $, that is $ x\in P $ or $ y\in P $.

\end{proof}

  \begin{theorem}\label{image of graded total prime}
  	Assume $ \varphi: L\longrightarrow L^{\prime} $ indicates a graded Lie epimorphism from the $ G $-graded Lie algebra $ L $ to the $ G $-graded algebra $ L^{\prime} $. Suppose $ P $ considered be a graded total prime ideal of $ L $ so that $ \mathrm{Ker}(\varphi)\subseteq P $. therefore, $ \varphi(P) $ is graded total prime ideal of $ L^{\prime} $.
  \end{theorem}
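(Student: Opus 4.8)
The plan is to mimic the structure of the proof of Theorem \ref{image of graded prime}, but working now at the level of homogeneous elements rather than graded ideals, and exploiting the hypothesis $\mathrm{Ker}(\varphi)\subseteq P$ to transport the total-prime condition through $\varphi$. So the proof will proceed in three stages: first confirm $\varphi(P)$ is a graded ideal, then verify the defining implication on homogeneous elements by lifting, and finally invoke total-primeness of $P$ upstairs.

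First I would check that $\varphi(P)$ is a graded ideal of $L^{\prime}$. Since $\varphi$ is onto, $\varphi(P)$ is an ideal of $L^{\prime}$, and because $\varphi$ is graded with $P=\bigoplus_{g\in G}(P\cap L_g)$, the image decomposes as $\varphi(P)=\sum_{g\in G}\varphi(P\cap L_g)$ with $\varphi(P\cap L_g)\subseteq L^{\prime}_g$, so $\varphi(P)$ is graded. Next I would take arbitrary $x^{\prime},y^{\prime}\in h(L^{\prime})$ with $[x^{\prime},y^{\prime}]\in\varphi(P)$ and aim to conclude $x^{\prime}\in\varphi(P)$ or $y^{\prime}\in\varphi(P)$. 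The key step is to lift: since $\varphi$ is a graded epimorphism, I can choose homogeneous preimages $x,y\in h(L)$ with $\varphi(x)=x^{\prime}$ and $\varphi(y)=y^{\prime}$. Then $\varphi([x,y])=[x^{\prime},y^{\prime}]\in\varphi(P)$, so $[x,y]\in\varphi^{-1}(\varphi(P))$. As in the proof of Theorem \ref{image of graded prime}, the inclusion $\mathrm{Ker}(\varphi)\subseteq P$ forces $\varphi^{-1}(\varphi(P))=P$, whence $[x,y]\in P$. Since $x,y\in h(L)$ and $P$ is graded total prime, $x\in P$ or $y\in P$, and applying $\varphi$ gives $x^{\prime}=\varphi(x)\in\varphi(P)$ or $y^{\prime}=\varphi(y)\in\varphi(P)$, which is exactly the graded total prime condition for $\varphi(P)$.

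The main obstacle is the lifting step, namely guaranteeing that every homogeneous $x^{\prime}\in L^{\prime}_g$ has a homogeneous preimage lying in $L_g$. This is where ``graded epimorphism'' must be read as degree-preserving surjectivity on each homogeneous component; I would state this explicitly and, if needed, deduce it from $L=\bigoplus_{g}L_g$, the containment $\varphi(L_g)\subseteq L^{\prime}_g$, and surjectivity of $\varphi$, so that $\varphi(L_g)=L^{\prime}_g$ for each $g$. Once this is in place, the remaining steps are routine and run in complete parallel with the earlier image theorem.
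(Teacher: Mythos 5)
Your proposal is correct and follows essentially the same route as the paper: lift $x^{\prime},y^{\prime}$ to homogeneous $a,b\in h(L)$, use $\mathrm{Ker}(\varphi)\subseteq P$ to conclude $[a,b]\in P$ (the paper phrases this via $[a,b]-c\in\mathrm{Ker}(\varphi)$ for a preimage $c\in P$ of $[x^{\prime},y^{\prime}]$, which is the same as your $\varphi^{-1}(\varphi(P))=P$), and then apply total primeness of $P$. Your explicit attention to why homogeneous elements of $L^{\prime}$ admit homogeneous preimages is a point the paper takes for granted, and is a worthwhile addition.
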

  
  \begin{proof}
  	Assume $x, y\in h(L^{\prime}) $ so that  $ [x,y] \subseteq \varphi(P)$. As $ \varphi  $ is onto then  there exists a homogeneous elements  $ a,b \in L $ so that $ \varphi(a)=x $ and $ \varphi(b)=y $. Moreover, there exists $ c\in P $ so that $ \varphi(c)=[x,y] $. Hence $\varphi([a,b])=[\varphi(a),\varphi(b)=[x,y]=\varphi(c)]$, which implies that $ [a,b]-c\in \mathrm{Ker}(\varphi)\subseteq P $. Since $ c\in P $, then $ [a,b]\in P  $, obtains that $ a\in P $ or $ b\in P $. Thus, $ x\in \varphi(P) $  or $ y\in \varphi(P) $.

  \end{proof}
  
 Furthermore, Theorem (\ref{image of graded total prime}) directly leads to the following conclusion..
  \begin{corollary}
  	Suppose $ \varphi: L\longrightarrow L^{\prime} $ considered to be a graded Lie epimorphism. Assume $ P^{\prime} $ indicates a graded ideal of $ L^{\prime} $. If $ \varphi^{-1}(P^{\prime}) $ is graded total prime ideal of $ L $, then $ P^{\prime} $ is a graded total prime ideal of $ L^{\prime} $.
  \end{corollary}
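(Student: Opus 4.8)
The plan is to obtain this as an immediate consequence of Theorem~\ref{image of graded total prime} by applying that theorem to the preimage ideal $P\bydef \varphi^{-1}(P^{\prime})$. First I would record that $P$ contains the kernel of $\varphi$: since $P^{\prime}$ is an ideal it contains $0$, so $\mathrm{Ker}(\varphi)=\varphi^{-1}(\{0\})\subseteq\varphi^{-1}(P^{\prime})=P$. The hypothesis of the corollary already asserts that $P=\varphi^{-1}(P^{\prime})$ is a graded total prime ideal of $L$, so both hypotheses of Theorem~\ref{image of graded total prime}, namely graded total primeness of $P$ and the containment $\mathrm{Ker}(\varphi)\subseteq P$, are in place with no further work.

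Next I would invoke Theorem~\ref{image of graded total prime} directly to conclude that $\varphi(P)$ is a graded total prime ideal of $L^{\prime}$. At this stage the only remaining task is to identify $\varphi(P)$ with the ideal $P^{\prime}$ we started with. In general for a preimage one has only the inclusion-equality $\varphi(\varphi^{-1}(P^{\prime}))=P^{\prime}\cap\varphi(L)$, but since $\varphi$ is a graded \emph{epimorphism} we have $\varphi(L)=L^{\prime}$, and therefore $\varphi(P)=\varphi(\varphi^{-1}(P^{\prime}))=P^{\prime}$. Substituting this equality into the previous step yields that $P^{\prime}$ itself is graded total prime, which is exactly the claim.

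I expect the only genuine, though minor, obstacle to be this set-theoretic identification $\varphi(\varphi^{-1}(P^{\prime}))=P^{\prime}$, which is precisely the point where surjectivity of $\varphi$ is essential; everything else is a direct appeal to the already-established theorem. As a self-contained alternative that avoids citing the theorem, one could argue along the same lines as in the proof of Theorem~\ref{image of graded total prime}: given $x^{\prime},y^{\prime}\in h(L^{\prime})$ with $[x^{\prime},y^{\prime}]\in P^{\prime}$, lift them to homogeneous preimages $a,b\in h(L)$ using that $\varphi$ is a graded epimorphism, observe that $[a,b]\in\varphi^{-1}(P^{\prime})$, apply the assumed graded total primeness of $\varphi^{-1}(P^{\prime})$ to deduce $a\in\varphi^{-1}(P^{\prime})$ or $b\in\varphi^{-1}(P^{\prime})$, and then push forward through $\varphi$ to get $x^{\prime}\in P^{\prime}$ or $y^{\prime}\in P^{\prime}$.
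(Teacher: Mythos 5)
Your proposal is correct and matches the paper's intent exactly: the paper states this corollary as a direct consequence of Theorem~\ref{image of graded total prime} without writing out the details, and your argument (applying the theorem to $P=\varphi^{-1}(P^{\prime})$, noting $\mathrm{Ker}(\varphi)\subseteq P$, and using surjectivity to get $\varphi(\varphi^{-1}(P^{\prime}))=P^{\prime}$) is precisely the derivation being invoked.
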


\end{document}